%%%%%%%
%
%  This is a LaTeX file containing the paper
%
%    The M\"obius function of partitions with restricted block sizes
%
%  by Richard Ehrenborg and Margaret Readdy
%
%
%	Last edited: September 25, 2006
%
%%%%%%%

%%%%%%%%%%%%%%%%%%%%%%%%%%%%%%%%%
%
% pdf settings
%
%%%%%%%%%%%%%%%%%%%%%%%%%%%%%%%%%
%
\pdfpagewidth=8.5truein
\pdfpageheight=11truein
%
%%%%%%%%%%%%%%%%%%%%%%%%%%%%%%%%%

\documentclass[11pt]{article}
\usepackage{latexsym}

%%%%% Here is the command to make the graphics package
%%%%% to work with pdfLaTeX
\ifx\pdftexversion\undefined
  \usepackage[dvips]{graphics}
\else
  \usepackage[pdftex]{graphics}
\fi
%%%%%

\setlength{\topmargin}{ -1.5cm}
\setlength{\oddsidemargin}{ -0.5cm}
\textwidth 17cm
\textheight 22.4cm

\font\german = eufm10 scaled\magstep1
\font\Cp = msbm10

\newcommand{\Rrr}{\hbox{\Cp R}}

\newcommand{\Ssss}{\hbox{\german S}}

\newcommand{\qed}{\mbox{$\Box$}\vspace{\baselineskip}}

\newenvironment{proof}{\noindent {\bf Proof:}}
                      {{\qed}}

\newenvironment{proof_special_}[1]{\noindent {\bf #1}}
                              {\vspace{-2mm}}

\newtheorem{theorem}{Theorem}[section]

\newtheorem{definition}[theorem]{Definition}
\newtheorem{corollary}[theorem]{Corollary}
\newtheorem{example}[theorem]{Example}

\newtheorem{continuation}[theorem]{Continuation of Example}

\newcommand{\hz}{\hat{0}}
\newcommand{\ho}{\hat{1}}

\newcommand{\um}{\underline{m}}

\font\Cp = msbm10

\parskip=12pt

\begin{document}

\title{The M\"obius function of partitions with restricted block sizes}

\author{{\sc Richard EHRENBORG\thanks{Partially
supported by National Science
Foundation grant 0200624.}$\;$  
         and Margaret A.\ READDY}}

\date{
{\small
{\it  Department of Mathematics,
      University of Kentucky,
      Lexington, KY 40506 USA} \\[2 mm]
      Received 3 February 2006; accepted 30 August 2006
}
}

\maketitle

\begin{abstract}
The purpose of this paper is to compute the M\"obius function
of filters in the partition lattice formed
by restricting to partitions by type.
The M\"obius function is determined in terms of the
descent set statistics on permutations and the
M\"obius function of filters in
the lattice of integer compositions.
When the underlying integer partition is a knapsack
partition,
the M\"obius function on integer compositions
is determined by a topological argument.
In this proof the permutahedron
makes a cameo appearance.

\vspace*{2 mm}

\noindent
{\em MSC:} 05A17; 05A18; 06A07

\vspace*{2 mm}

\noindent
{\em Keywords:}
Euler and tangent numbers;
descent set statistic;
set partition lattice;
$r$-divisible partition lattice;
knapsack partitions;
permutahedron
\end{abstract}

\vspace*{-2mm}

\section{Introduction}
\setcounter{equation}{0}

\vspace*{-1mm}

Sylvester~\cite{Sylvester} initiated
the study of subposets of the
partition lattice.
He proved that the M\"obius function of 
the poset of set partitions
where each block has even cardinality is given
by every other  Euler number, also known as the tangent numbers.
Recall the Euler numbers enumerate  alternating permutations.
Stanley~\cite{Stanley_e_s} extended this result
by considering set partitions where each block has cardinality divisible
by $r$. 
In this case the M\"obius function is given
by the number of permutations with descent set
$\{r, 2r, 3r, \ldots\}$.
In this paper we continue to 
explore the 
connection between 
partition lattices and permutation statistics.

To each set partition we can assign a type, which is
the integer partition consisting of the
multiset of cardinalities of the blocks.
Given a set $F$ of integer partitions it is then natural
to ask for the M\"obius function of the associated poset of set partitions
whose types belong to $F$.
We slightly modify this by instead working
with 
pointed set partitions
and pointed integer partitions
and letting $F$ be a filter 
in the poset of
pointed integer partitions.

In Theorem~\ref{theorem_main} we consider
pointed set partitions whose types
belong to given filter $F$ of pointed integer partitions.
The question of computing the M\"obius function
is reduced to the descent set statistics
and M\"obius functions in the smaller and more tractable 
lattice of integer compositions.
In Section~\ref{section_knapsack}
we consider knapsack partitions,
a notion motivated by a well-known cryptosystem.
Theorem~\ref{theorem_M}
allows us to determine the M\"obius function
of the integer composition lattice.
We  obtain explicit expressions for the sought-after M\"obius function
in Theorem~\ref{theorem_three}.

\vspace*{-2mm}

\section{Pointed integer partitions, set partitions
         and compositions}
\setcounter{equation}{0}

\vspace*{-1mm}

Recall that an integer partition
$\lambda =  \{\lambda_{1}, \ldots, \lambda_{k}\}$
of a non-negative integer $n$ is a multiset of positive
integers having sum $n$,
that is,
$n = \lambda_{1} + \cdots + \lambda_{k}$.

\begin{definition}
Let $n$ be a non-negative integer.
A pointed integer partition
of $n$
is a pair
$\{\lambda, \um\} =
    \{\lambda_{1}, \ldots, \lambda_{k}, \um\}$
where
$m$ is a non-negative integer
and 
$\lambda =  \{\lambda_{1}, \ldots, \lambda_{k}\}$ 
is an integer partition of $n-m$.
The integer $m$ is called the pointed part.
It  is underlined to distinguish it from the
other parts of the partition.
\end{definition}
Denote by  $I_{n}^{\bullet}$ 
the collection of all pointed integer partitions
of the non-negative integer $n$.
Partially order the set $I_{n}^{\bullet}$
by the two cover relations
$$
    \{\lambda_{1}, \ldots, \lambda_{k-2},
         \lambda_{k-1}, \lambda_{k}, \um\}
       \prec 
    \{\lambda_{1}, \ldots, \lambda_{k-2},
         \lambda_{k-1} + \lambda_{k}, \um\}
$$
and
$$
    \{\lambda_{1},
         \ldots, \lambda_{k-1}, \lambda_{k}, \um\}
       \prec 
    \{\lambda_{1},
         \ldots, \lambda_{k-1}, \underline{\lambda_{k} + m}\} .
$$
In words, by adding two parts together we go up in the order.
If one of the parts is the pointed part then the sum becomes the
pointed part in the new pointed partition.
Observe that the poset $I_{n}^{\bullet}$ is not a lattice for $n \geq 3$.

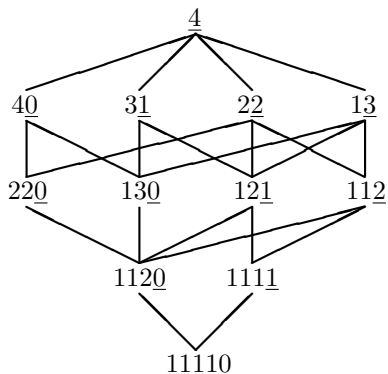
\begin{figure}
\setlength{\unitlength}{0.5mm}
\begin{center}
\begin{picture}(90,90)(0,0)

\thicklines

\put(37,0){\small $1 1 1 1 \underline{0}$}

\put(45,6){\line(-1,1){15}}
\put(45,6){\line(1,1){15}}

\put(23,23){{\small $1 1 2 \underline{0}$}}
\put(53,23){{\small $1 1 1 \underline{1}$}}

\put(30,29){\line(-2,1){30}}
\put(30,29){\line(0,1){15}}
\put(30,29){\line(2,1){30}}
\put(30,29){\line(4,1){60}}
\put(60,29){\line(0,1){15}}
\put(60,29){\line(2,1){30}}

\put(-5,46){\small $2 2 \underline{0}$}
\put(25,46){\small $1 3 \underline{0}$}
\put(55,46){\small $1 2 \underline{1}$}
\put(85,46){\small $1 1 \underline{2}$}

\put(0,52){\line(0,1){15}}
\put(0,52){\line(4,1){60}}
\put(30,52){\line(-2,1){30}}
\put(30,52){\line(0,1){15}}
\put(30,52){\line(4,1){60}}
\put(60,52){\line(-2,1){30}}
\put(60,52){\line(0,1){15}}
\put(60,52){\line(2,1){30}}
\put(90,52){\line(-2,1){30}}
\put(90,52){\line(0,1){15}}

\put(-4,69){\small $4 \underline{0}$}
\put(26,69){\small $3 \underline{1}$}
\put(56,69){\small $2 \underline{2}$}
\put(86,69){\small $1 \underline{3}$}

\put(0,75){\line(3,1){45}}
\put(30,75){\line(1,1){15}}
\put(60,75){\line(-1,1){15}}
\put(90,75){\line(-3,1){45}}

\put(43,92){\small $\underline{4}$}

\end{picture}
\end{center}
\caption{The poset $I_{4}$ of pointed partitions of the integer $4$.}
\label{figure_pointed_partitions_4}
\end{figure}

\begin{definition}
A pointed set partition $\pi = (\sigma,Z)$
of a finite set $S$ 
consists of a subset $Z$ of $S$ and a
partition $\sigma$ of the set difference $S - Z$.
\end{definition}
We call the subset $Z$ the {\em zero block} of
the pointed set partition $\pi$.
Moreover, we denote the number of blocks
(including the zero block) of $\pi$ by $|\pi|$.
Let $\Pi_{n}^{\bullet}$ be the poset of all
pointed set partitions on the set $\{1, \ldots, n\}$,
where the partial order is given by
refinement. That is,
for two pointed set partitions $\pi$ and $\pi^{\prime}$,
we have that $\pi \leq \pi^{\prime}$ if every block
of $\pi$ is contained in some block
(possibly the zero block) of $\pi^{\prime}$
and
the zero block of $\pi$ is contained in 
the zero block of $\pi^{\prime}$.
Observe that $\Pi_{n}^{\bullet}$ is isomorphic to
the partition lattice $\Pi_{n+1}$
by inserting the element $n+1$ into the zero block
and letting the zero block be an ordinary block of the partition.

We define the {\em type} of a pointed partition $(\sigma,Z)$
to be the pointed integer partition $\{\lambda,\um\}$
where~$m$ is the cardinality of the zero block $Z$
and $\lambda$ is the multiset of the sizes of the blocks of $\sigma$,
that is,
$\lambda = \{|B| \: : \: B \in \sigma\}$.

\begin{definition}
Let $n$ be a non-negative integer.
A pointed integer composition of $n$
is a list $\vec{c} = (c_{1}, \ldots, c_{k-1}, \underline{c_{k}})$
of non-negative integers with sum $n$
where $c_{1}$ through $c_{k-1}$ are required to be positive.
\end{definition}
Note that the last entry $c_{k}$ is allowed to be $0$,
so we underline it to distinguish it from the other entries.
Let $C_{n}^{\bullet}$ be the collection of all
pointed compositions of $n$. 
Partially order
the elements of $C_{n}^{\bullet}$ 
by the cover relations
$$
   (c_{1}, \ldots, c_{j-1}, c_{j}, c_{j+1}, c_{j+2}, \ldots, c_{k-1},
                                                  \underline{c_{k}})
       \prec 
   (c_{1}, \ldots, c_{j-1}, c_{j} + c_{j+1}, c_{j+2}, \ldots, c_{k-1},
                                                  \underline{c_{k}})
,  $$
$$
   (c_{1}, \ldots, c_{k-2}, c_{k-1}, \underline{c_{k}})
       \prec 
   (c_{1}, \ldots, c_{k-2}, \underline{c_{k-1} + c_{k}})
.  $$
That is, the cover relation occurs by adding two adjacent entries
of the composition.
Observe that the poset~$C_{n}^{\bullet}$ is isomorphic to the
Boolean algebra on $n$ elements.

For a permutation $\tau$ in
the symmetric group on $n$ elements $\Ssss_{n}$,
the {\em descent set} is a subset of $\{1,\ldots,n-1\}$ defined as
$\{i \: : \: \tau(i) > \tau(i+1)\}$.
Given a permutation $\tau$ in $\Ssss_{n}$
with descent set
$\{s_{1}, s_{2}, \ldots, s_{k-1}\}$
where $s_{1} < s_{2} < \cdots < s_{k-1}$,
define the descent composition of $\tau$
to be
$(s_{1}, s_{2}-s_{1}, \ldots, s_{k-1}-s_{k-2}, n-s_{k-1})$.
For a composition $\vec{c} = (c_{1}, \ldots, c_{k})$ of $n$
let $\beta(\vec{c})$ denote the number of permutations in
the symmetric group $\Ssss_{n}$ with descent composition $\vec{c}$,
that is, with the descent set
$\{c_{1}, c_{1}+c_{2}, \ldots, c_{1} + \cdots + c_{k-1}\}$.
For a pointed composition
$\vec{c} = (c_{1}, \ldots, c_{k-1}, \underline{c_{k}})$
with $c_{k}$ strictly greater than $0$,
let $\beta(\vec{c})$ be as in the non-pointed composition case.
If the pointed composition 
$\vec{c} = (c_{1}, \ldots, c_{k-1}, \underline{c_{k}})$
satisfies $k \geq 2$ and $c_{k} = 0$,
let $\beta(\vec{c}) = 0$.
Lastly, for $\vec{c} = (\underline{0})$ let $\beta(\vec{c}) = 1$.

A uniform way to view the descent composition
statistic independently of the last part
is the following.
For a pointed composition 
$\vec{c} = (c_{1}, \ldots, c_{k-1}, \underline{c_{k}})$
let 
$\vec{d}$ be the composition
where the last part is incremented by one, that is,
$\vec{d} = (c_{1}, \ldots, c_{k-1}, c_{k}+1)$.
Then $\beta(\vec{c})$
is the number of permutations~$\tau$ in 
$\Ssss_{n+1}$ having
descent composition $\vec{d}$ and
satisfying $\tau(n+1) = n+1$.

The type of an integer composition
$(c_{1}, \ldots, c_{k-1}, \underline{c_{k}})$
is the pointed integer partition
$\{c_{1}, \ldots, c_{k-1}, \underline{c_{k}}\}$.
Observe that the last part of the composition
becomes the pointed part.

We remark that the three posets 
$I_{n}^{\bullet}$,
$\Pi_{n}^{\bullet}$,
and
$C_{n}^{\bullet}$
are graded, and hence ranked.
Throughout we will denote the rank function of
a graded poset by $\rho$
and use
$\rho(x,y)$ to denote the
rank difference
$\rho(x,y) = \rho(y) - \rho(x)$.

\vspace*{-2mm}

\section{The M\"obius function of restricted partitions}
\setcounter{equation}{0}

\vspace*{-1mm}

Recall that a {\em filter} $F$ (also known as an upper order ideal)
in a poset $Q$ is a subset of $Q$ such that if
$x \leq y$ and $x$ belongs to $F$
then $y$ belongs to $F$. 
For $S$ a subset of the poset $Q$,
the filter generated by~$S$ is given by
$\{y \in Q \: : \: \exists \, x \in S \mbox{ such that }
                        x \leq y\}$.
Note that
if $f$ is an order preserving map from a poset $P$ to a poset $Q$
and $F$ is a filter of $Q$ then the inverse
image $f^{-1}(F)$ is a filter of $P$.
For further information about posets,
we refer the reader to Stanley's treatise~\cite{Stanley_EC_I}.

Let $F$ be a filter in the pointed integer partition poset $I_{n}^{\bullet}$.
Let $\Pi_{n}^{\bullet}(F)$ be the filter of the pointed set partition lattice
$\Pi_{n}^{\bullet}$
consisting of all set partitions having their types belonging to $F$,
that is, 
$$   \Pi_{n}^{\bullet}(F)
   =
       \{ \pi \in \Pi_{n}^{\bullet} \:\: : \:\: {\rm type}(\pi) \in F \}
       . $$
Similarly, define 
$C_{n}^{\bullet}(F)$ to be the filter of pointed compositions
having types belonging to $F$,
that is, 
$$   C_{n}^{\bullet}(F)
   =
       \{ \vec{c} \in C_{n}^{\bullet} \:\: : \:\: {\rm type}(\vec{c}) \in F \}   .
$$
Observe that both
$\Pi_{n}^{\bullet}(F)$ and 
$C_{n}^{\bullet}(F)$ are join semi-lattices.
Hence after adjoining a minimal element~$\hz$
we have that both
$\Pi_{n}^{\bullet}(F) \cup \{\hz\}$
and 
$C_{n}^{\bullet}(F) \cup \{\hz\}$
are lattices.

\begin{theorem}
Let $F$ be a filter of the pointed integer partition poset
$I_{n}^{\bullet}$.
Then the M\"obius function of the
filter~$\Pi_{n}^{\bullet}(F)$ with a minimal element $\hz$
adjoined is given by
\begin{equation}
      \mu\left( \Pi_{n}^{\bullet}(F) \cup \{\hz\} \right)
   =
      \sum_{\vec{c} \in C_{n}^{\bullet}(F)}
             (-1)^{\rho(\vec{c},\ho)}
           \cdot
             \mu_{C_{n}^{\bullet}(F) \cup \{\hz\}}(\hz,\vec{c})
           \cdot
             \beta(\vec{c})              .  
\label{equation_main}
\end{equation}
\label{theorem_main}
\end{theorem}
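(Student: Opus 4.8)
The plan is to first collapse the problem to the top of the pointed set partition lattice, and then transport the resulting sum to the composition lattice through the descent statistic. Since the type map $\Pi_{n}^{\bullet} \to I_{n}^{\bullet}$ is order preserving and $F$ is a filter, $\Pi_{n}^{\bullet}(F)$ is a filter of $\Pi_{n}^{\bullet}$; hence every interval $[\pi,\ho]$ with $\pi \in \Pi_{n}^{\bullet}(F)$ lies entirely inside the filter and its M\"obius value agrees with the one computed in $\Pi_{n}^{\bullet}$. Starting from the defining relation $\sum_{\hz \leq \pi \leq \ho} \mu(\pi,\ho) = 0$ in $\Pi_{n}^{\bullet}(F) \cup \{\hz\}$, I would obtain
$$ \mu\left(\Pi_{n}^{\bullet}(F) \cup \{\hz\}\right) = - \sum_{\pi \in \Pi_{n}^{\bullet}(F)} \mu_{\Pi_{n}^{\bullet}}(\pi,\ho) . $$
Because $\Pi_{n}^{\bullet} \cong \Pi_{n+1}$ and the interval above a pointed partition with $|\pi|$ blocks is isomorphic to $\Pi_{|\pi|}$, the inner value is $(-1)^{|\pi|-1}(|\pi|-1)!$, so the right-hand side equals $\sum_{\pi} (-1)^{|\pi|} (|\pi|-1)!$.

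Next I would read $(|\pi|-1)!$ as the number of linear orderings of the $|\pi|-1$ non-zero blocks of $\pi$. Recording the ordered block sizes followed by the cardinality of the zero block turns each such ordered pointed set partition into a pointed composition $\vec{c} = (c_{1},\ldots,c_{k-1},\underline{c_{k}})$ with $k = |\pi|$ parts and with type in $F$. The number of ordered pointed set partitions realizing a given $\vec{c}$ is the multinomial coefficient $\frac{n!}{c_{1}! \cdots c_{k}!}$, so regrouping yields
$$ \mu\left(\Pi_{n}^{\bullet}(F) \cup \{\hz\}\right) = \sum_{\vec{c} \in C_{n}^{\bullet}(F)} (-1)^{k} \cdot \frac{n!}{c_{1}! \cdots c_{k}!} . $$

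The bridge to permutations is the identity $\frac{n!}{c_{1}! \cdots c_{k}!} = \sum_{\vec{e} \geq \vec{c}} \beta(\vec{e})$: a permutation counted by the multinomial coefficient has descent set contained in the partial sums of $\vec{c}$, and grouping these permutations by their exact descent composition — using the description of $\beta$ via permutations $\tau \in \Ssss_{n+1}$ with $\tau(n+1)=n+1$, which also forces $\beta(\vec{e})=0$ when the last part vanishes — partitions them according to the coarsenings $\vec{e} \geq \vec{c}$. Since $C_{n}^{\bullet}(F)$ is a filter, all these $\vec{e}$ remain inside it, and interchanging the order of summation gives $\mu = \sum_{\vec{e} \in C_{n}^{\bullet}(F)} \beta(\vec{e}) \cdot g(\vec{e})$, where $g(\vec{e}) = \sum_{\vec{c} \leq \vec{e},\ \vec{c} \in C_{n}^{\bullet}(F)} (-1)^{k(\vec{c})}$.

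It remains to identify $g(\vec{e})$ with a M\"obius number. Using $C_{n}^{\bullet} \cong B_{n}$, for which $\mu_{C_{n}^{\bullet}}(\vec{c},\vec{e}) = (-1)^{k(\vec{c})-k(\vec{e})}$, I would rewrite $g(\vec{e}) = (-1)^{k(\vec{e})} \sum_{\vec{c} \leq \vec{e},\ \vec{c} \in F} \mu_{C_{n}^{\bullet}}(\vec{c},\vec{e})$. The heart of the argument is then the general filter lemma
$$ \sum_{\vec{c} \leq \vec{e},\ \vec{c} \in F} \mu_{C_{n}^{\bullet}}(\vec{c},\vec{e}) = - \mu_{C_{n}^{\bullet}(F) \cup \{\hz\}}(\hz,\vec{e}) , $$
which holds for any filter. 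I expect this to be the main obstacle, and I would prove it by verifying that the negative of the left-hand side obeys the defining recurrence of $\mu_{C_{n}^{\bullet}(F) \cup \{\hz\}}(\hz,\cdot)$: summing it over the interval below $\vec{e}$ and collapsing via $\sum_{\vec{c} \leq \vec{x} \leq \vec{e}} \mu_{C_{n}^{\bullet}}(\vec{c},\vec{x}) = \delta_{\vec{c}\,\vec{e}}$, where the filter property makes the membership $\vec{x} \in F$ automatic once $\vec{c} \in F$. Combining this with $\rho(\vec{e},\ho) = k(\vec{e})-1$ gives $g(\vec{e}) = (-1)^{\rho(\vec{e},\ho)} \mu_{C_{n}^{\bullet}(F) \cup \{\hz\}}(\hz,\vec{e})$, and substitution produces exactly (\ref{equation_main}). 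Besides the filter lemma, the only delicate bookkeeping is keeping the pointed conventions consistent, especially treating the zero block as the last (pointed) part and tracking the vanishing of $\beta$ when that part is zero.
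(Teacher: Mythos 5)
Your proof is correct and is essentially the paper's argument run in reverse: both proofs pivot on the intermediate quantity $-\sum_{\vec{d} \in C_{n}^{\bullet}(F)} (-1)^{\rho(\vec{d},\ho)} {n \choose \vec{d}}$, interpreted as a signed count of ordered set partitions, with the paper expanding the right-hand side of (\ref{equation_main}) down to this quantity and then up to the left-hand side, while you traverse the same path in the opposite direction. The one step your reversal forces that the paper avoids --- the filter lemma $\sum_{\vec{c} \leq \vec{e},\ \vec{c} \in F} \mu_{C_{n}^{\bullet}}(\vec{c},\vec{e}) = -\mu_{C_{n}^{\bullet}(F) \cup \{\hz\}}(\hz,\vec{e})$ --- is just the M\"obius-inverted form of the paper's one-line collapse $\sum_{\hz < \vec{c} \leq \vec{d}} \mu(\hz,\vec{c}) = -1$, and your verification of it via the defining recurrence (using the filter property to make the inner membership condition automatic) is sound.
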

Although $C_{n}^{\bullet}(F) \cup \{\hz\}$ is not necessarily
graded, the expression 
$\rho(\vec{c},\ho)$ appearing in the statement of
Theorem~\ref{theorem_main} is well-defined since the
interval 
$[\vec{c},\ho]$ is itself graded.

\begin{proof_special_}{{Proof of Theorem~\ref{theorem_main}:}}
For a pointed composition
$\vec{c} = (c_{1}, \ldots, c_{k-1}, \underline{c_{k}})$
of a non-negative integer~$n$
recall that the multinomial coefficient ${n \choose \vec{c}}$
is defined as $n!/(c_{1}! \cdots c_{k}!)$. Observe that
${n \choose \vec{c}}$ counts the number of permutations
in $\Ssss_{n}$ having descent set contained in
the set $\{c_{1}, c_{1}+c_{2}, \ldots, c_{1} + \cdots + c_{k-1}\}$.
By the principle of inclusion and exclusion, we have that
$\beta(\vec{c}) = \sum_{\vec{c} \leq \vec{d}} \;
                     (-1)^{\rho(\vec{c},\vec{d})} 
                        \cdot {n \choose \vec{d}}$.
Thus the right-hand side 
of~(\ref{equation_main}) is given by
$$
      \sum_{\vec{c} \in C_{n}^{\bullet}(F)}
             (-1)^{\rho(\vec{c},\ho)}
           \cdot
             \mu_{C_{n}^{\bullet}(F) \cup \{\hz\}}(\hz,\vec{c})
           \cdot
             \beta(\vec{c})
\hspace*{70 mm}
$$
\begin{eqnarray*}
  & = &
      \sum_{\vec{c} \in C_{n}^{\bullet}(F)}
      \sum_{\vec{c} \leq \vec{d}} 
             (-1)^{\rho(\vec{d},\ho)}
           \cdot
             \mu_{C_{n}^{\bullet}(F) \cup \{\hz\}}(\hz,\vec{c})
           \cdot
             {n \choose \vec{d}}    \\
  & = &
      \sum_{\vec{d} \in C_{n}^{\bullet}(F)}
             (-1)^{\rho(\vec{d},\ho)}
           \cdot
             {n \choose \vec{d}}
           \cdot
      \sum_{\hz < \vec{c} \leq \vec{d}}
             \mu_{C_{n}^{\bullet}(F) \cup \{\hz\}}(\hz,\vec{c})  \\
  & = &
   -
      \sum_{\vec{d} \in C_{n}^{\bullet}(F)}
             (-1)^{\rho(\vec{d},\ho)}
           \cdot
             {n \choose \vec{d}}    .
\end{eqnarray*}
The last sum can be viewed as follows.
An {\em ordered set partition} is a partition
in $\Pi_{n}^{\bullet}$
with an ordering of the blocks
such that the last block is the zero block.
The type of an ordered
set partition is the pointed composition
where one lists the size of each block
in their given order.
Note that given a composition~$\vec{d}$ of $n$,
there are ${n \choose \vec{d}}$
ordered set partitions with $\vec{d}$ as its type. Hence we have that
\begin{eqnarray*}
\hspace*{20 mm}
   -
      \sum_{\vec{d} \in C_{n}^{\bullet}(F)}
             (-1)^{\rho(\vec{d},\ho)}
           \cdot
             {n \choose \vec{d}}    
  & = &
      \sum_{{\pi \mbox{ \tiny ordered set partition}}
             \atop
            {\mbox{\tiny type}(\pi) \in C_{n}^{\bullet}(F)}}
              (-1)^{|\pi|}    \\
  & = &
      \sum_{\pi \in \Pi_{n}^{\bullet}(F)}
              (-1)^{|\pi|}
           \cdot
              (|\pi|-1)!      \\
  & = &
    -
      \sum_{\pi \in \Pi_{n}^{\bullet}(F)}
              \mu_{\Pi_{n}^{\bullet}(F) \cup \{\hz\}}(\pi,\ho) \\
  & = &
              \mu(\Pi_{n}^{\bullet}(F) \cup \{\hz\})   .
\hspace*{20 mm}
\hspace*{20 mm} \qed
\end{eqnarray*}
\end{proof_special_}

\begin{example}
{\rm
We have the following identity
connecting the Stirling numbers of the second kind
with the Eulerian numbers:
\begin{equation}
    - \sum_{j=1}^{k} 
          (-1)^{j-1} \cdot (j-1)!
        \cdot
          S(n+1,j)
  =
      (-1)^{k}
    \cdot
      \sum_{j=1}^{k} 
          {{n-j} \choose {n-k}}
        \cdot
          A(n,j)     .
\label{equation_Eulerian_Stirling}
\end{equation}
Recall that the Stirling number of the second
kind $S(n,j)$ counts the number of set partitions
of an $n$-element set into $j$ parts,
whereas the Eulerian number $A(n,j)$
counts the number of permutations in~$\Ssss_{n}$
with $j-1$ descents. 
To prove
equation~(\ref{equation_Eulerian_Stirling})
let $F$ be the filter of $I_{n}^{\bullet}$ consisting
of all pointed integer partitions with at most $k$ parts.
Then the filter $\Pi_{n}^{\bullet}(F)$ consists of
all pointed set partitions with at most $k$ parts.
Using the Stirling numbers of the second kind,
we can write the M\"obius function of
$\Pi_{n}^{\bullet}(F) \cup \{\hz\}$ to be the left-hand side of
equation~(\ref{equation_Eulerian_Stirling}).
Similarly, the filter $C_{n}^{\bullet}(F)$ consists of
all pointed compositions
of $n$ with at most $k$ parts.
Let $\vec{c}$ be a composition with $j$ parts.
Then the interval
$[\hz,\vec{c}]$ in the poset
$C_{n}^{\bullet}(F) \cup \{\hz\}$
is a rank-selected Boolean algebra,
more specifically,
the Boolean algebra~$B_{n-j+1}$
with ranks $1$ through $n-k$ removed.
Hence the M\"obius function
$\mu_{C_{n}^{\bullet}(F) \cup \{\hz\}}(\hz,\vec{c})$
is given by
$(-1)^{k-j+1} \cdot {{n-j} \choose {n-k}}$.
Summing over all pointed compositions $\vec{c}$
consisting of $j$ parts in the right-hand side
in Theorem~\ref{theorem_main},
we obtain
$(-1)^{k} \cdot {{n-j} \choose {n-k}}$
times the number of permutations in $\Ssss_{n}$
with $j-1$ descents. 
Hence the identity follows.
When $k=n$ this identity states that
$\mu(\Pi_{n+1}) = (-1)^{n} \cdot n!$
since
$\Pi_{n}^{\bullet}(F) \cup \{\hz\} \cong \Pi_{n}^{\bullet} \cong \Pi_{n+1}$.
}
\end{example}

We have the following corollary.
This result was proved with different techniques in~\cite{Ehrenborg_Readdy}.

\begin{corollary}
Let $n = r \cdot p + m$.
Let $\Pi_{n}^{\bullet,r,m}$ be all the partitions in 
$\Pi_{n}^{\bullet}$ where the zero block has cardinality
at least $m$ and the remaining blocks have
cardinality divisible by $r$.
Then
the M\"obius function of
$\Pi_{n}^{\bullet,r,m} \cup \{\hz\}$ is given by
$$   
      \mu\left(\Pi_{n}^{\bullet,r,m} \cup \{\hz\}\right)
   =
      (-1)^{p + 1} 
    \cdot 
      \beta(\underbrace{r,r, \ldots, r}_{p}, \um)
  . $$
\label{corollary_Dowling}
\end{corollary}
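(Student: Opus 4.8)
The plan is to apply Theorem~\ref{theorem_main} to the filter $F$ of $I_{n}^{\bullet}$ whose members are the pointed integer partitions $\{\lambda, \underline{\ell}\}$ all of whose non-pointed parts $\lambda_{i}$ are divisible by $r$ and whose pointed part satisfies $\ell \geq m$. One checks immediately that $F$ is a filter: merging two parts divisible by $r$ yields a part divisible by $r$, and absorbing a part into the pointed part only increases it. By the definition of the type of a pointed set partition, $\Pi_{n}^{\bullet}(F)$ is exactly $\Pi_{n}^{\bullet,r,m}$, so the left-hand side of~(\ref{equation_main}) is the desired M\"obius value.

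Next I would analyze the companion filter $C_{n}^{\bullet}(F)$, consisting of those pointed compositions $(c_{1}, \ldots, c_{k-1}, \underline{c_{k}})$ with each $c_{i}$ for $i < k$ a positive multiple of $r$ and $c_{k} \geq m$. Writing $c_{i} = r a_{i}$, the constraint $\sum_{i} c_{i} = n = rp + m$ forces $c_{k} = m + r(p - \sum_{i<k} a_{i})$ with $\sum_{i<k} a_{i} \leq p$. The assignment $(c_{1}, \ldots, c_{k-1}, \underline{c_{k}}) \mapsto (a_{1}, \ldots, a_{k-1}, \underline{p - \sum_{i} a_{i}})$ is then an order isomorphism $C_{n}^{\bullet}(F) \cong C_{p}^{\bullet}$, and in particular $C_{n}^{\bullet}(F)$ has a unique minimum, namely the finest admissible composition $\vec{c}_{\min} = (\underbrace{r, \ldots, r}_{p}, \um)$ obtained by splitting every non-pointed part down to $r$ and peeling off copies of $r$ from the pointed part until only $m$ remains.

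The crux is the evaluation of $\mu_{C_{n}^{\bullet}(F) \cup \{\hz\}}(\hz, \vec{c})$. Here I would prove the general fact that adjoining a new bottom element $\hz$ below a poset $P$ that already possesses a unique minimum $\hat{0}_{P}$ annihilates the M\"obius function: $\mu(\hz, \hat{0}_{P}) = -1$ and $\mu(\hz, x) = 0$ for every $x > \hat{0}_{P}$. The quickest route is to observe that $\mu(\hz, \hat{0}_{P}) = -1$ and, for $x > \hat{0}_{P}$, the defining recurrence gives $\sum_{\hat{0}_{P} \leq y \leq x} \mu(\hz, y) = -1$; M\"obius inversion in $P$ then yields $\mu(\hz, x) = -\delta_{\hat{0}_{P}, x}$. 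Applied to $P = C_{n}^{\bullet}(F)$ with $\hat{0}_{P} = \vec{c}_{\min}$, this collapses the sum in~(\ref{equation_main}) to its single term indexed by $\vec{c}_{\min}$. I expect this M\"obius vanishing to be the main (though short) obstacle, since it is precisely what converts the a priori complicated alternating sum into one summand.

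Finally I would compute the surviving term. Since $C_{n}^{\bullet}$ is isomorphic to the Boolean algebra on $n$ elements, a pointed composition with $k$ parts has rank $n+1-k$, so $\rho(\vec{c},\ho) = k - 1$; the minimum $\vec{c}_{\min}$ has $p+1$ parts and hence $\rho(\vec{c}_{\min},\ho) = p$. Substituting $\mu_{C_{n}^{\bullet}(F)\cup\{\hz\}}(\hz,\vec{c}_{\min}) = -1$ and $(-1)^{\rho(\vec{c}_{\min},\ho)} = (-1)^{p}$ into~(\ref{equation_main}) leaves $(-1)^{p} \cdot (-1) \cdot \beta(\underbrace{r,\ldots,r}_{p},\um) = (-1)^{p+1} \cdot \beta(\underbrace{r,\ldots,r}_{p},\um)$, which is the asserted formula.
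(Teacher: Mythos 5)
Your proof is correct and follows essentially the same route as the paper: apply Theorem~\ref{theorem_main} to the filter generated by $\{r,\ldots,r,\um\}$ and observe that $C_{n}^{\bullet}(F)$ has the unique minimal element $(r,\ldots,r,\um)$, so that $\mu_{C_{n}^{\bullet}(F)\cup\{\hz\}}(\hz,\vec{c})$ vanishes except at that element and only one term of~(\ref{equation_main}) survives. The paper phrases this vanishing as the unique-atom observation (the join-of-atoms criterion) rather than via your direct M\"obius-inversion computation, but the content is identical.
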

\begin{proof}
Let $F$ be the filter of $I_{n}^{\bullet}$ generated by
the pointed partition
$\{r,r, \ldots, r, \um\}$.
The poset $\Pi_{n}^{\bullet,r,m}$
is exactly the filter $\Pi_{n}^{\bullet}(F)$.
Observe that $C_{n}^{\bullet}(F) \cup \{\hz\}$ has a unique atom,
namely
the pointed composition $(r,r, \ldots, r, \um)$.
Thus the M\"obius function
$\mu_{C_{n}^{\bullet}(F) \cup \{\hz\}}(\hz,\vec{c})$ is
non-zero only for this composition.
Hence the summation in the right-hand side
of~(\ref{equation_main})
has only one term, namely
$(-1)^{p + 1} \cdot \beta(r,r, \ldots, r, \um)$.
\end{proof}

By setting $m=r-1$ in the previous corollary
and using the bijection between
$\Pi^{\bullet}_{n}$ and $\Pi_{n+1}$,
we obtain the following corollary due to
Stanley~\cite{Stanley_e_s,Stanley_EC_II}.
\begin{corollary}[Stanley]
Let $n = r \cdot p$ and let $\Pi_{n}^{r}$ denoted
the $r$-divisible lattice, that is, all partitions
on $n$ elements where the cardinality of each block is
divisible by $r$ and a minimal element
$\hz$ is adjoined.
Then the M\"obius function of $\Pi_{n}^{r}$,
$\mu(\Pi_{n}^{r})$,
is given by the sign $(-1)^{p}$
times the
number of permutations~$\tau$ in $\Ssss_{n}$
with descent set
$\{r, 2r, \ldots, n-r\}$
and $\tau(n) = n$.
\label{corollary_r_divisible_lattice}
\end{corollary}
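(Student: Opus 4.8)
The plan is to deduce this from Corollary~\ref{corollary_Dowling} by specializing $m = r-1$ and then transporting everything across the isomorphism $\Pi_{n-1}^{\bullet} \cong \Pi_{n}$. First I would apply Corollary~\ref{corollary_Dowling} on the ground set $\{1, \ldots, n-1\}$, so that the role of the corollary's ``$n$'' is played by $N = n-1$. Since $n = rp$ we have $N = rp - 1 = r(p-1) + (r-1)$, and hence with $m = r-1$ the integer ``$p$'' of the corollary becomes $p-1$. Corollary~\ref{corollary_Dowling} then reads
\[
   \mu\left(\Pi_{n-1}^{\bullet, r, r-1} \cup \{\hz\}\right)
   = (-1)^{p} \cdot \beta(\underbrace{r, r, \ldots, r}_{p-1}, \underline{r-1}).
\]

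The key step is to identify $\Pi_{n-1}^{\bullet, r, r-1} \cup \{\hz\}$ with $\Pi_{n}^{r}$. Under the isomorphism $\Pi_{n-1}^{\bullet} \cong \Pi_{n}$ that inserts the element $n$ into the zero block, a pointed partition $(\sigma, Z)$ becomes the ordinary partition of $[n]$ whose blocks are those of $\sigma$ together with $Z \cup \{n\}$. The nonzero blocks of $(\sigma, Z)$ have cardinality divisible by $r$ exactly when the corresponding ordinary blocks do, while the new block $Z \cup \{n\}$ has cardinality $|Z| + 1$, which is divisible by $r$ precisely when $|Z| \equiv r-1 \pmod{r}$. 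The point I would verify carefully is that on a ground set of size $N = rp-1$ the hypothesis ``$|Z| \geq r-1$'' of Corollary~\ref{corollary_Dowling} is automatic and is equivalent to this congruence: if the nonzero blocks all have size divisible by $r$, their total is a multiple of $r$, forcing $|Z| = rp - 1 - rq = r(p-q) - 1$ for some $q$, a number which is $\equiv r-1 \pmod{r}$ and at least $r-1$. Thus the filter $\Pi_{n-1}^{\bullet, r, r-1}$ maps exactly onto the set of $r$-divisible partitions of $[n]$; adjoining $\hz$ to both sides gives the poset isomorphism $\Pi_{n-1}^{\bullet, r, r-1} \cup \{\hz\} \cong \Pi_{n}^{r}$, and therefore $\mu(\Pi_{n}^{r}) = (-1)^{p} \cdot \beta(r, r, \ldots, r, \underline{r-1})$.

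Finally I would translate $\beta(r, \ldots, r, \underline{r-1})$ into the descent-set language of the statement using the uniform interpretation of $\beta$ recorded in Section~2. Incrementing the pointed last part turns the pointed composition $(r, \ldots, r, \underline{r-1})$ of $N = rp-1$ into the composition $\vec{d} = (r, r, \ldots, r)$ of $n = rp$ with $p$ equal parts; by that interpretation $\beta(r, \ldots, r, \underline{r-1})$ equals the number of permutations $\tau \in \Ssss_{n}$ with descent composition $\vec{d}$ and $\tau(n) = n$. Since the descent composition $\vec{d}$ corresponds to the descent set $\{r, 2r, \ldots, (p-1)r\} = \{r, 2r, \ldots, n-r\}$, this is exactly the count in the statement, and combining it with the previous paragraph finishes the proof. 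I expect the only genuine obstacle to be the bookkeeping of this middle step: matching the ``zero block at least $m$'' condition with the $r$-divisibility of $Z \cup \{n\}$, and keeping the index shift $p \mapsto p-1$ and the passage $\Ssss_{N+1} = \Ssss_{n}$ consistent throughout.
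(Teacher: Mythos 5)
Your proposal is correct and is exactly the paper's route: the paper derives this corollary in one sentence by setting $m=r-1$ in Corollary~\ref{corollary_Dowling} and invoking the bijection $\Pi_{N}^{\bullet}\cong\Pi_{N+1}$, which is precisely your argument. Your index bookkeeping (the shift $p\mapsto p-1$, the automatic congruence $|Z|\equiv r-1\pmod r$, and the translation of $\beta(r,\ldots,r,\underline{r-1})$ via the ``uniform'' interpretation of $\beta$) is all accurate and simply makes explicit what the paper leaves to the reader.
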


\vspace*{-6mm}

\section{Knapsack partitions}
\label{section_knapsack}
\setcounter{equation}{0}

\vspace*{-1mm}

Let $\lambda = \{e_{1}^{m_{1}}, e_{2}^{m_{2}}, \ldots, e_{q}^{m_{q}}\}$
be a partition, that is, a multiset of positive integers, 
where $m_{i}$ denotes the multiplicity of the element $e_{i}$.
We tacitly assume that all the $e_{i}$'s are distinct,
that is, $e_{i} \neq e_{j}$ for $i \neq j$.
Since there are $\prod_{i=1}^{q} (m_{i} + 1)$ multi-subsets $\mu$
of $\lambda$, the following inequality holds:
\begin{equation}
       \left| \left\{ \sum_{e \in \mu} e \:\: : \:\:
                \mu \subseteq \lambda \right\} \right|
   \leq
      \prod_{i=1}^{q} (m_{i} + 1)  .  
\label{equation_definition_knapsack}
\end{equation}
When equality holds in~(\ref{equation_definition_knapsack}),
we call
$\lambda$ {\em a knapsack partition.}
Observe this is equivalent to 
that each integer in the set on the left-hand side
of~(\ref{equation_definition_knapsack})
has a unique representation as a sum of elements from the
multiset $\lambda$.
When all the entries in $\lambda$ are distinct,
that is, $\lambda$ is a set, this definition
reduces to the usual notion
of a knapsack system appearing in cryptography.

\begin{example}
(a) If
$e_{1}, \ldots, e_{q}, m_{1}, \ldots, m_{q}$
are positive integers 
satisfying the inequality
$\sum_{i=1}^{j-1} m_{i} \cdot e_{i} \leq e_{j}$
for all $j = 2, \ldots, q$
then
$\{e_{1}^{m_{1}}, \ldots, e_{q}^{m_{q}}\}$
is a knapsack partition.

\noindent
(b)
If $\{\lambda_{1}, \ldots, \lambda_{p}\}$
is a knapsack partition, $q$ a prime greater than
the sum $\lambda_{1} + \cdots + \lambda_{p}$
and $j$ a positive integer less than $q$, then
$\{j \cdot \lambda_{1} \bmod q, \ldots, j \cdot \lambda_{p} \bmod q\}$
is a knapsack partition.
\end{example}
A pointed integer partition
$\{\lambda,\um\}$ is called a 
{\em pointed knapsack partition} if
$\lambda$ is a knapsack partition.

In this section we consider the filter $F$
generated by a pointed knapsack partition $\{\lambda,\um\}$.
We determine the M\"obius function
of the poset $C_{n}^{\bullet}(F) \cup \{\hz\}$
and thus obtain an explicit formula 
for
$\mu(\Pi_{n}^{\bullet}(F) \cup \{\hz\})$.
% In this section we determine the M\"obius function
% of the filter $C_{n}^{\bullet}(F)$ in the poset
% of pointed compositions of $n$
% when the filter $F$ in the pointed integer partition
% poset $I_{n}^{\bullet}$ is generated
% by a pointed knapsack partition $\{\lambda,\um\}$.
Before proceeding
one more definition is needed.
Let $V(\lambda,\um) = V$ be the collection of all
pointed compositions
$\vec{c} = (c_{1}, \ldots, c_{k-1}, \um)$ in
the filter $C_{n}^{\bullet}(F)$
such that when each $c_{i}$, $1 \leq i \leq k-1$, is expressed
as a sum of parts of $\lambda$, the summands for each $c_{i}$
are distinct.

\begin{example}
{\rm
For the pointed knapsack partition 
$\{1,1,1,4,\um\}$
the set $V$ of pointed compositions is
\begin{eqnarray*}
    V
      & = &
    \{(1,1,1,4,\um), (1,1,5,\um), (1,1,4,1,\um), (1,5,1,\um),  \\
      &   &
     \:\:\:\:
          (1,4,1,1,\um), (5,1,1,\um), (4,1,1,1,\um) \}  .  
\end{eqnarray*}
Observe the composition $(2,1,4,\um)$ does not belong
to $V$ since $2$ is the sum of two equal parts.
}
\label{example_knapsack}
\end{example}

\begin{example}
{\rm
For the pointed knapsack partition 
$\{r, r, \ldots, r, \um\}$
the set $V$ only consists
of the pointed composition
$(r, r, \ldots, r, \um)$.
}
\label{example_knapsack_r_r}
\end{example}

The ordered partition lattice $Q_{p}$ consists of all 
ordered partitions of the set $\{1, \ldots, p\}$
together with a minimal element $\hz$ adjoined.
The cover relation in $Q_{p}$ is to merge two adjacent blocks.
The ordered partition lattice is
isomorphic to
the face lattice of the
$(p-1)$-dimensional permutahedron;
see for instance~\cite{Billera_Sarangarajan}
or Exercise~2.9 in~\cite{Oriented_Matroids}.
Hence $Q_{p}$ is Eulerian and has
M\"obius function given by
$\mu_{Q_{p}}(x,y) = (-1)^{\rho(x,y)}$.

\begin{theorem}
Let $F$ be the filter in the pointed integer
partition poset $I_{n}^{\bullet}$ generated by 
the pointed knapsack partition
$\{\lambda,\um\}
   = \{\lambda_{1}, \ldots, \lambda_{p}, \um\}$
of the integer $n$.
Let
$\vec{c} = (c_{1}, \ldots, c_{k-1}, \underline{c_{k}})$
be a pointed composition in 
the filter $C_{n}^{\bullet}(F)$.
Then the M\"obius function
$\mu_{C_{n}^{\bullet}(F) \cup \{\hz\}}(\hz,\vec{c})$ is given
by
$$
 \mu(\hz,\vec{c})
      =
 \left\{ \begin{array}{c l}
    (-1)^{p-k}   & \mbox{ if } \vec{c} \in V, \\
    0            & \mbox{ otherwise.}
         \end{array} \right.  $$
\label{theorem_M}
\end{theorem}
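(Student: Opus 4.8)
The plan is to read off $\mu(\hz,\vec c)$ as the reduced Euler characteristic of the open interval $(\hz,\vec c)$ and to use the knapsack hypothesis to factor this interval as a product indexed by the entries of $\vec c$. Concretely, since $\{\lambda,\um\}$ is a knapsack partition, every entry $c_i$ of a composition in $C_n^{\bullet}(F)$ has a \emph{unique} expression as a sum of parts of $\lambda$; writing $A_i$ for the resulting sub-multiset, one obtains the unique ordered partition $(A_1,\dots,A_{k-1},A_k)$ of the multiset $\lambda$ with $\sum A_i=c_i$ for $i<k$ and $\sum A_k=c_k-m$. A composition $\vec b\le\vec c$ refines $\vec c$, and refining the $i$-th entry amounts, again by uniqueness of knapsack representations, to choosing an ordered multiset partition of $A_i$ (for the last, pointed entry one keeps the pointed value $m$ inside the final piece). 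These choices are independent, so $[\hz,\vec c]=\{\hz\}\cup\prod_{i=1}^{k-1}P(A_i)\times P^{\bullet}(A_k)$, where $P(A)$ is the poset of ordered multiset partitions of $A$ and $P^{\bullet}(A_k)$ is its pointed analogue.

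First I would record the elementary identity $\mu(\hz,\vec c)=(-1)^{t-1}\prod_j\mu_{\hat R_j}(\hz,\hat 1_j)$, where $R_j$ ranges over the $t=k$ factor posets, $\hat 1_j$ is the top of $R_j$, and $\hat R_j=R_j\cup\{\hz\}$. This follows purely formally from the defining recursion for $\mu$: for $x\ne\hz$ the interval $[x,\vec c]$ lies inside the product and factors, and summing $\mu(x,\vec c)$ over $x\in\prod_j R_j$ converts each factor-sum $\sum_{x_j\in R_j}\mu_{R_j}(x_j,\hat 1_j)$ into $-\mu_{\hat R_j}(\hz,\hat 1_j)$. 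Thus the whole computation reduces to the single-block quantities $\mu_{\hat R_j}(\hz,\hat 1_j)=\tilde{\chi}\big(\Delta(R_j\setminus\hat 1_j)\big)$, and it remains to evaluate each block's contribution.

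The main case is a non-pointed block $A_i$ that is a set. Here $\hat R_i$ is exactly the ordered partition lattice $Q_{|A_i|}$, the face lattice of the permutahedron — this is the cameo appearance. Since $Q_{|A_i|}$ is Eulerian, $\mu_{\hat R_i}(\hz,\hat 1_i)=(-1)^{|A_i|}$. If the pointed block is empty ($A_k=\emptyset$, i.e.\ the pointed part equals $m$), the factor $P^{\bullet}(\emptyset)$ is a single point, contributing $-1$. For a composition in $V$ — where by definition every non-pointed block is a set and the pointed block is empty — the identity above yields $(-1)^{k-1}\cdot(-1)^{p+1}=(-1)^{p-k}$, using $\sum_{i<k}|A_i|=p$. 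This produces the stated value on $V$.

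It then remains to prove vanishing off $V$, which occurs through two mechanisms. When the pointed block is non-empty I would use the interior operator on $P^{\bullet}(A_k)\setminus\{\hat 1_k\}$ that splits the pointed element off into its own final block: it is order preserving, decreasing and idempotent, and its image has the maximum $(\sum A_k,\underline m)$, hence is contractible, so that factor vanishes. The genuinely delicate case — and the step I expect to be the main obstacle — is a non-pointed block $A_i$ containing a repeated value, where one must show $\mu_{\hat R_i}(\hz,\hat 1_i)=0$ for the ordered multiset partition poset of a multiset with a repetition. When all parts of $A_i$ are equal this is immediate, since $P(A_i)$ then has a unique atom and the standard argument gives $\mu=0$; the mixed case needs a genuinely topological input, namely that $\Delta(P(A_i)\setminus\hat 1_i)$ is contractible (equivalently, that the quotient of the permutahedron boundary by the Young subgroup permuting the equal parts is acyclic), which I would try to establish by a discrete Morse matching. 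Once this vanishing is secured, the product formula annihilates $\mu(\hz,\vec c)$ for every $\vec c\notin V$, completing the proof.
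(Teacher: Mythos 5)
Your decomposition of the interval $[\hz,\vec{c}]$ as $\{\hz\}$ adjoined below a product $\prod_{i<k}P(A_i)\times P^{\bullet}(A_k)$ of ordered multiset partition posets is legitimate (the knapsack hypothesis does guarantee that a refinement of $\vec{c}$ lying in $C_n^{\bullet}(F)$ is the same data as a choice of ordered multiset partition of each $A_i$), the product formula for $\mu$ is correct, and the sign count $(-1)^{k-1}\cdot(-1)^{p+1}=(-1)^{p-k}$ on $V$ checks out. This is a genuinely different organization from the paper, which never factors the interval: it first kills the case of pointed part exceeding $m$ by observing that all atoms of $[\hz,\vec{c}]$ have pointed part $m$, so their join cannot be $\vec{c}$ (Corollary~3.9.5 of \cite{Stanley_EC_I}), and then identifies the entire ideal of compositions with pointed part $m$, all at once, with a filter $R$ of the ordered partition lattice $Q_p$, realized geometrically as the subcomplex $\Gamma$ of the boundary of the dual permutahedron lying in one chamber of the arrangement $\{x_i=x_j : \lambda_i=\lambda_j\}$. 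Since $\Gamma$ is a $(p-1)$-ball, Proposition~3.8.9 of \cite{Stanley_EC_I} gives $\mu(G,\ho)=0$ for boundary faces (exactly the compositions outside $V$) and $(-1)^{\rho(G,\ho)}$ for interior ones, settling both cases simultaneously.

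The genuine gap is the step you yourself flag: showing $\mu_{\widehat{P(A_i)}}(\hz,\hat 1_i)=0$ when the sub-multiset $A_i$ has a repeated part but is not constant. Your ``unique atom'' observation only covers the constant case, and ``I would try a discrete Morse matching'' is not an argument --- without it the vanishing off $V$, which is half the theorem, is unproven. (The claim is true: for instance for $A_i=\{1,1,4\}$ one computes $\mu=0$ by hand, but the cancellation is not formal.) Note that the paper's topological idea closes exactly this hole and could be imported block-by-block into your framework: $\widehat{P(A_i)}$ is, by the same map $f$, the face lattice (reversed) of the part of the boundary of the dual permutahedron on $|A_i|$ letters lying in the chamber $x_r\leq x_s$ for equal parts; when $A_i$ has a repetition this is a proper ball rather than the whole sphere, so its reduced Euler characteristic --- which is $\mu_{\widehat{P(A_i)}}(\hz,\hat 1_i)$ --- vanishes. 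Until you supply that (or an equivalent shelling/Morse argument), the proof is incomplete precisely where the knapsack structure has to do real work.
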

\begin{proof}
Consider the lattice
$C_{n}^{\bullet}(F) \cup \{\hz\}$ and
let $\vec{c}$ be a pointed composition in $C_{n}^{\bullet}(F)$.
Assume that the pointed part of
the composition $\vec{c}$ is greater than $m$.
The interval $[\hz,\vec{c}]$ in
$C_{n}^{\bullet}(F) \cup \{\hz\}$ is itself a lattice
and each atom in this interval has pointed part equal to $m$.
The join of these atoms also has pointed part equal to $m$,
so the join cannot not equal the composition $\vec{c}$.
By Corollary~3.9.5 in~\cite{Stanley_EC_I},
the M\"obius function vanishes, that is, $\mu(\hz,\vec{c}) = 0$.

Let $P$ be the ideal of $C_{n}^{\bullet}(F)$ 
consisting of all pointed compositions with pointed part $m$.
Observe that $P$ has the maximal element
$(\lambda_{1} + \cdots + \lambda_{p}, \um)$.
Hence $P$ is a finite join semi-lattice
and thus $P \cup \{\hz\}$ is lattice.

We first consider the case when all the parts
of $\lambda$ are distinct.
In this case $P = V$.
There is an isomorphism between
the lattice $P \cup \{\hz\}$ and the ordered partition lattice
$Q_{p}$.
The isomorphism $f$ sends the ordered partition
$(B_{1}, B_{2}, \ldots, B_{k})$ to the
pointed composition
$$    f((B_{1}, B_{2}, \ldots, B_{k}))
    =
      \left( \sum_{i \in B_{1}} \lambda_{i},
             \sum_{i \in B_{2}} \lambda_{i},
                  \ldots,
             \sum_{i \in B_{k}} \lambda_{i} , \um \right)  , $$
and $f(\hz) = \hz$.
Hence we conclude that the
M\"obius function is given by
$\mu(\hz,\vec{c}) = (-1)^{p-k}$.

For the general case we allow the partition 
to contain multiple entries. The ideal $P$ is then
isomorphic to a filter of the ordered partition lattice $Q_{p}$.
Namely, let $R$ be the collection of all ordered
partitions 
$(B_{1}, B_{2}, \ldots, B_{k})$ such that
if $\lambda_{i} = \lambda_{j}$ for $1 \leq i < j \leq p$
then the elements $i$ and $j$ either appear in the same block
or the element $i$ appears in a block before the block containing
the element~$j$. It is straightforward to verify
that $R$ is a filter of $Q_{p}$
and that the map $f$ is again an isomorphism,
this time from $R$ to $P$.

Recall that the boundary of the dual of
the permutahedron is a simplicial
complex $\Delta_{p}$. We view this simplicial complex as
the $(p-1)$-dimensional sphere in $\Rrr^{p}$
cut by the ${p \choose 2}$  hyperplanes
$x_{i} = x_{j}$ for $1 \leq i < j \leq p$.
The geometric picture is the sphere $S^{p-1}$ with
${p \choose 2}$ great spheres on it.
Let ${\cal H}$ be the hyperplane arrangement
in $\Rrr^{p}$ given by
the collection of
the hyperplanes
$x_{i} = x_{j}$
when
$\lambda_{i} = \lambda_{j}$.
Let~${\cal C}$ be the chamber
${\cal C}
  =
    \{(x_{1}, \ldots, x_{p})
           \:\: : \:\:
       x_{i} \leq x_{j} \mbox{ if } i < j \mbox{ and }
                     \lambda_{i} = \lambda_{j}\}$.
Note that ${\cal C}$ is a cone in $\Rrr^{p}$.

The filter $R$ in the poset $Q_{p}$ corresponds to the subcomplex
$\Gamma$ of the complex $\Delta_{p}$ consisting
of all faces $G$ in
$\Delta_{p}$ that are contained in
the chamber ${\cal C}$.
The geometric realization of $\Gamma$ is
the intersection of the unit sphere $S^{p-1}$
and the chamber ${\cal C}$,
and hence
is homeomorphic to a $(p-1)$-dimensional ball.
Let ${\cal L}(\Gamma)$ denote the face lattice of $\Gamma$.

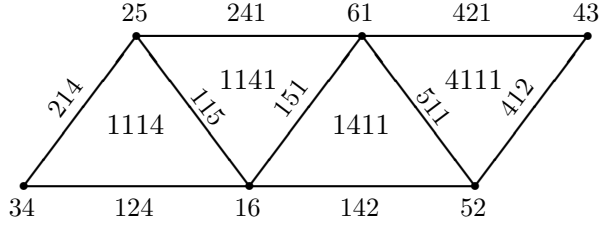
\begin{figure}
\setlength{\unitlength}{1.0mm}
\begin{center}
\begin{picture}(75,20)(0,0)

\put(0,0){\circle*{1}}
\put(15,20){\circle*{1}}
\put(30,0){\circle*{1}}
\put(45,20){\circle*{1}}
\put(60,0){\circle*{1}}
\put(75,20){\circle*{1}}

\thicklines

\put(0,0){\line(1,0){60}}
\put(15,20){\line(1,0){60}}
\put(0,0){\line(3,4){15}}
\put(30,0){\line(-3,4){15}}
\put(30,0){\line(3,4){15}}
\put(60,0){\line(-3,4){15}}
\put(60,0){\line(3,4){15}}

%% Faces:
\put(11,7){$1114$}
\put(26,13){$1141$}
\put(41,7){$1411$}
\put(56,13){$4111$}

%% Vertices:
\put(-2,-4){\small $34$}
\put(28,-4){\small $16$}
\put(58,-4){\small $52$}
\put(13,22){\small $25$}
\put(43,22){\small $61$}
\put(73,22){\small $43$}

%% Edges:
\put(12,-4){\small $124$}
\put(42,-4){\small $142$}
\put(27,22){\small $241$}
\put(57,22){\small $421$}

\put(3,9){\rotatebox{53}{\small $214$}}
\put(22,12){\rotatebox{307}{\small $115$}}
\put(33,9){\rotatebox{53}{\small $151$}}
\put(52,12){\rotatebox{307}{\small $511$}}
\put(63,9){\rotatebox{53}{\small $412$}}

\end{picture}
\end{center}
\caption{The complex $\Gamma$ corresponding to
the pointed knapsack partition $\{1,1,1,4,\underline{m}\}$.
In this figure the pointed part $\um$ has been omitted.
Observe the faces not on the boundary of $\Gamma$ correspond to
the set $V$.  Refer to Example~\ref{example_knapsack}.}
\end{figure}

A face $G$ is on the boundary of $\Gamma$
if and only if $G$ is contained in one of the hyperplanes 
in~${\cal H}$.
In other words, $G$ is on the boundary of $\Gamma$
if and only if the corresponding pointed composition
$\vec{c} = (c_{1}, \ldots, c_{k-1}, \um)$
has an entry $c_{i}$, $i<k$, such that
when $c_{i}$ is expressed uniquely as a sum of parts of $\lambda$,
two terms are equal. 
In this case
the pointed composition $\vec{c}$ does not belong to the set $V$.

For a composition $\vec{c}$ let $G$ be the associated face in
$\Gamma$. Then we have that
$$    \mu(\hz,\vec{c})
    =
      \mu_{{\cal L}(\Gamma)}(G, \ho)
    =
      \left\{ \begin{array}{c l}
             0 
                & \mbox{ if $G$ is on the boundary of $\Gamma$,} \\
             \widetilde{\chi}(\Gamma) = 0
                & \mbox{ if $G$ is the empty face,} \\
             (-1)^{\rho(G,\ho)}
                & \mbox{ otherwise,}
              \end{array} \right. $$
where the last step is Proposition~3.8.9 in~\cite{Stanley_EC_I},
proving the theorem.
\end{proof}

Combining Theorems~\ref{theorem_main}
and~\ref{theorem_M}, we have:
\begin{theorem}
Let $F$ be the filter in the
pointed integer partition poset
$I_{n}^{\bullet}$ generated by 
the pointed knapsack partition 
$\{\lambda,\um\}
   = \{\lambda_{1}, \ldots, \lambda_{p}, \um\}$
of the integer $n$.
Then the M\"obius function
$\mu(\Pi_{n}^{\bullet}(F) \cup \{\hz\})$
of the filter~$\Pi_{n}^{\bullet}(F)$
with a minimal element $\hz$ adjoined
is given by
$$
 \mu(\Pi_{n}^{\bullet}(F) \cup \{\hz\})
      =
  (-1)^{p-1}
      \cdot
  \sum_{\vec{c} \in V} \beta(\vec{c})  ,  $$
that is, $(-1)^{p-1}$ times
the number of permutations in $\Ssss_{n}$
whose descent composition belongs to the set~$V$.
\label{theorem_three}
\end{theorem}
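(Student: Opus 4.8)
The plan is to obtain Theorem~\ref{theorem_three} by feeding the evaluation of the composition-lattice M\"obius function from Theorem~\ref{theorem_M} directly into the master formula~(\ref{equation_main}) of Theorem~\ref{theorem_main}. No genuinely new identity is required; the whole content is the restriction of the index set and a careful accounting of signs, together with one rank computation.

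First I would start from~(\ref{equation_main}) and apply Theorem~\ref{theorem_M}. Since $\mu_{C_{n}^{\bullet}(F) \cup \{\hz\}}(\hz,\vec{c})$ vanishes whenever $\vec{c} \notin V$, every such term drops out and the sum over all of $C_{n}^{\bullet}(F)$ collapses to a sum over $\vec{c} \in V$. On each surviving term I would substitute $\mu_{C_{n}^{\bullet}(F) \cup \{\hz\}}(\hz,\vec{c}) = (-1)^{p-k}$, where $k$ is the number of parts of $\vec{c} = (c_{1}, \ldots, c_{k-1}, \um)$.

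Next I would pin down the remaining sign $(-1)^{\rho(\vec{c},\ho)}$ by a rank computation in $C_{n}^{\bullet}$. Using that $C_{n}^{\bullet}$ is isomorphic to the Boolean algebra on $n$ elements, the most refined pointed composition $(1,\ldots,1,\underline{0})$ has $n+1$ parts and rank $0$, and each cover relation merges two parts; hence a pointed composition with $k$ parts has rank $n-k+1$. The top element is $\ho = (\underline{n})$, which has a single part and rank $n$, and it lies in $C_{n}^{\bullet}(F)$ because the filter $F$ contains the top $\{\underline{n}\}$ of $I_{n}^{\bullet}$. Since the interval $[\vec{c},\ho]$ sits entirely in the filter and is graded, I conclude $\rho(\vec{c},\ho) = n-(n-k+1) = k-1$, so $(-1)^{\rho(\vec{c},\ho)} = (-1)^{k-1}$. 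Multiplying the two contributions gives $(-1)^{k-1}\cdot(-1)^{p-k} = (-1)^{p-1}$, which is independent of $k$ and can be factored out of the sum.

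Factoring out $(-1)^{p-1}$ then leaves precisely $\sum_{\vec{c}\in V}\beta(\vec{c})$, and reading $\beta(\vec{c})$ as the number of permutations in $\Ssss_{n}$ with descent composition $\vec{c}$ yields the asserted count. The only place where care is needed --- and the single spot where a sign error could enter --- is this rank computation $\rho(\vec{c},\ho) = k-1$, together with the normalization that $\ho$ really is the pointed composition $(\underline{n})$ inside the filter; once that is fixed, the cancellation of the $k$-dependence in the exponent is automatic and the theorem follows immediately.
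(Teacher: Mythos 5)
Your proposal is correct and is essentially the paper's own argument: the paper derives Theorem~\ref{theorem_three} simply by substituting Theorem~\ref{theorem_M} into equation~(\ref{equation_main}), exactly as you do. Your rank computation $\rho(\vec{c},\ho)=k-1$ and the resulting cancellation $(-1)^{k-1}(-1)^{p-k}=(-1)^{p-1}$ are the right (and only) bookkeeping needed.
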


Observe that Corollary~\ref{corollary_Dowling}
is also a consequence of Theorem~\ref{theorem_three}
using
Example~\ref{example_knapsack_r_r}.
In this case the complex $\Gamma$ is a simplex.

\addtocounter{theorem}{-4}
\begin{continuation}
Let $F$ be the filter in $I_{n}^{\bullet}$ generated
by the pointed knapsack partition $\{1,1,1,4,\um\}$.
Then 
\begin{eqnarray*}
  - \mu\left( \Pi_{n}^{\bullet}(F) \cup \{\hz\} \right)
  & = &
      \beta(1,1,1,4,\um) + \beta(1,1,5,\um) + \beta(1,1,4,1,\um) \\
  &   & 
    + \beta(1,5,1,\um) + \beta(1,4,1,1,\um) + \beta(5,1,1,\um) \\
  &   & 
    + \beta(4,1,1,1,\um) .
\end{eqnarray*}
\end{continuation}
\addtocounter{theorem}{4}

\vspace*{-6mm}

\section{Concluding remarks}
\setcounter{equation}{0}

\vspace*{-1mm}

The poset $\Pi_{n}^{\bullet}(F) \cup \{\hz\}$ raises many natural questions.
When the minimal elements of the filter $F$ have the same rank,
the poset $\Pi_{n}^{\bullet}(F) \cup \{\hz\}$ 
is graded.  One may ask if this is a shellable poset.
Similarly,
for a general filter, that is, 
when the minimal elements of the filter $F$ have different ranks,
the previous question extends to determining if
the order complex of $\Pi_{n}^{\bullet}(F) \cup \{\hz\}$
is non-pure shellable~\cite{Bjorner_Wachs}.
In the case when the poset is
not shellable, can one still determine the homology groups
of the order complex?
One goal here is to obtain a bijective proof of
Theorem~\ref{theorem_three}.

The symmetric group $\Ssss_{n}$ acts on
the order complex of $\Pi_{n}^{\bullet}(F) \cup \{\hz\}$.
This action is inherited by the homology groups.
Can this representation be determined?
When one considers a pointed knapsack partition
it is natural to conjecture that the poset is indeed
is shellable and hence the homology is concentrated
in the top homology.
Furthermore,
the dimension of the top homology is given by the
M\"obius function and
the action of the symmetric group is given by
the direct sum of Specht modules corresponding
to the skewpartitions associated with the compositions
in the set $V$.
For the pointed knapsack partition
$\{r, r, \ldots, r, \underline{r-1}\}$
corresponding to the $r$-divisible
lattice
(recall Corollary~\ref{corollary_r_divisible_lattice}),
this research program has been carried out.
See the papers~\cite{Calderbank_Hanlon_Robinson}
and~\cite{Wachs} and the references therein.

Finally, an enumerative question is to determine the number
of knapsack partitions of $n$.  This number 
seems related to the prime factor decomposition of $n$.
Techniques from analytic number theory may be required.

\vspace*{-2mm}

\section*{Acknowledgments}

We thank the referee for many helpful comments
and the MIT Mathematics Department, where this paper was completed
during the authors' sabbatical year.

\vspace*{-1mm}

\newcommand{\journal}[6]{#1, #2, #3 #4 (#5) #6.}
\newcommand{\book}[4]{#1, #2, #3, #4.}
\newcommand{\thesis}[4]{#1, #2, Doctoral dissertation, #3, #4.}
\newcommand{\preprint}[3]{#1, #2, preprint #3.}

{\small

}

\end{document}